\definecolor{gray75}{gray}{0.75}
\newcommand{\sln}{\linespread{1}}
\newcommand*{\email}[1]{\href{mailto:#1}{\nolinkurl{#1}} } 
\titleformat{\chapter}[block]{\LARGE\bfseries\sln}{Chapter \thechapter}{11pt}{\newline\huge\bfseries}
\newtheorem{thm}{Theorem}[section]
\newtheorem{rem}{Remark}[section]
\newtheorem{defn}{Definition}[section]
\newenvironment{proof}{\paragraph{Proof:}}{\hfill$\square$}
\newtheorem{proposition}{Proposition}[section]
\begin{document}
\title{ On minimal surfaces immersed in three dimensional Kropina Minkowski space}
\author{Ranadip Gangopadhyay\footnote{E-mail: ranadip.gangopadhyay1@bhu.ac.in}, Ashok Kumar\footnote{E-mail: ashok241001@bhu.ac.in} and Bankteshwar Tiwari\footnote{E-mail: btiwari@bhu.ac.in}\\DST-CIMS, Banaras Hindu University, Varanasi-221005, India}

\maketitle

\begin{abstract}
In this paper we consider a three dimensional Kropina space and obtain the partial differential equation that characterizes a minimal surfaces with the induced metric. Using this characterization equation we study various immersions of minimal surfaces. In particular, we obtain the partial differential equation that characterizes the minimal translation surfaces and show that the plane is the only such surface.
\end{abstract}

\section{Introduction}
The study of minimal surfaces in  Riemannian manifolds has been extensively developed \cite{TCWP,TI,RS}. Various well developed techniques  have played key roles in the differential geometry and partial differential equations. For instance, the estimates for nonlinear equations based on the maximum principle arising in Bernstein's classical work, and even in the Lebesgue's definition of the integral, that he developed in his thesis, is based  on the Plateau problem for minimal surfaces \cite{Ra1}.  However, minimal surfaces in Finsler spaces have not been studied and developed at the same pace. The fundamental contribution to the minimal surfaces of Finsler geometry was made by Shen \cite{ZS2}. Shen introduced the notion of mean curvature for immersions into Finsler manifolds and established some of its properties. As in the Riemannian case, if the mean curvature is identically zero, then the immersion is said to be minimal.
\par The Randers metric is the simplest class of non-Riemannian Finsler metric, defined  as $F=\alpha+\beta$, where $\alpha$ is a Riemannian metric and $\beta$ is a one-form. M. Souza and K. Tenenblat studied the surfaces of revolution in Minkowski space $\mathbb{R}^3$ with Randers metric \cite{MK} and found the condition for such surfaces to be minimal. Further Souza et al. obtained a Bernstein type theorem for minimal surfaces in a Randers space \cite{MJK}. Minimal surfaces in a Randers space have been studied by several authors \cite{NC1,NC2,NC3,RK}. Recently, minimal surface with Matsumoto metric has been studied in \cite{RGBT}.
\par It is well known that the classification of constant flag curvature Randers space has been done by Bao et al using Zermello Navigation for a time-independent vector field $W$ satisfying $h(W,W)<1$, in a Riemannian space $(M,h)$. In fact the solution  of a Zermello Navigation problem in a Riemannian manifold $(M,h)$ with a time independent vector field $W$ satisfying $h(W,W)<1$, gives a unique Randers metric on the manifold $M$ and conversely. However, for vector field $W$, satisfying $h(W,W)=1$, recently R. Yoshikawa and S.V. Sabau prove that the time minimizing path would be the geodesics of a Kropina metric which is given by $F=\frac{\alpha^2}{\beta}$, where $\alpha$ is a Riemannian metric and $\beta$ is a one-form \cite{YR}. Kropina metric was introduced by V.K. Kropina \cite{VK} and has numerous applications in physics and biology \cite{PLA}.  V. Balan studied the constant mean curvature surfaces in Minkowski space with Kropina metric and obtain some characteristic differential equations for these surfaces \cite{VB}.
\par In this paper we study the minimal surfaces immersed in three-dimensional Kropina spaces. We first obtain the characterization partial differential equation for a surface to be minimal in the Kropina space. Then we consider three different immersions, the surfaces of revolution, the graph of a smooth functions, the translation surfaces in the Kropina space.  In  Section $3$, we study the minimal surfaces of revolution and find the generating curves which gives the minimal surfaces explicitly. That is, we obtain the following theorem:
\begin{thm}\label{thm1}
Let $(\mathbb{R}^3, F=\alpha^2/\beta)$ be a Kropina space, where $\alpha$ is the Euclidean metric, and $\beta = bdx^3$ is a $1$-form with norm $b$,  satisfying $b>0$. Let $\varphi : M^2 \to \mathbb{R}^3$ be an immersion given by $ \varphi(x^1,x^2) = (f(x^1) \cos x^2, f(x^1) \sin x^2, x^1)$, $f$ is a smooth function and $f(x^1) > 0$. Then $\varphi$ is minimal, if and only if, either it is an open cone with  generating curve $f(x^1)=\dfrac{1}{\sqrt{2}}x^1+c$, or  a surface with the generating curve $x^1=(-f^{2/3}-2a)\sqrt{a-f^{2/3}}+d$, here $a,c,d$ being real constants.
\end{thm}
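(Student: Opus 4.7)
The plan is to invoke the characterization PDE established in Section 2 for a surface in $(\mathbb{R}^3, F = \alpha^2/\beta)$ to be minimal, and specialize it to the surface of revolution $\varphi(x^1, x^2) = (f(x^1)\cos x^2, f(x^1)\sin x^2, x^1)$. First I will compute the coordinate tangent vectors $\varphi_{x^1} = (f'\cos x^2, f'\sin x^2, 1)$ and $\varphi_{x^2} = (-f\sin x^2, f\cos x^2, 0)$ together with their Euclidean inner products, obtaining the diagonal induced $\alpha$-metric with entries $1+f'^2$ and $f^2$. Two simplifications are crucial: the pullback of the one-form becomes $\varphi^{*}\beta = b\,dx^1$, since $\beta(\varphi_{x^1}) = b$ and $\beta(\varphi_{x^2}) = 0$; and the rotational symmetry in $x^2$ means every coefficient appearing in the minimal surface PDE depends only on $x^1$. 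The two-variable PDE therefore reduces to an ordinary differential equation in $f(x^1)$ alone, involving $f$, $f'$ and $f''$.

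Next I will solve the resulting ODE. Based on the form of the claim, I expect the equation to factor (up to a nonvanishing prefactor) into two branches: one algebraic branch where $f'' = 0$ forces the constant slope $(f')^2 = 1/2$, giving the cone $f(x^1) = \frac{1}{\sqrt{2}} x^1 + c$; and a genuinely non-linear branch of the shape $A(f,f')\,f'' + B(f,f') = 0$. Isolating the singular, constant-slope solution from the continuous family is the main obstacle: a routine separation-of-variables approach would miss the cone, so care is needed to enumerate both possibilities cleanly.

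For the non-cone branch, I expect the ODE to admit an explicit first integral yielding $(f')^2 = a/f^{2/3} - 1$ for a real constant $a>0$. Separating variables gives
\[
x^1 - d = \int \frac{f^{1/3}\,df}{\sqrt{a - f^{2/3}}},
\]
and the substitution $u = f^{2/3}$ (so $du = \tfrac{2}{3} f^{-1/3}\,df$) transforms this into $\tfrac{3}{2}\int u(a-u)^{-1/2}\,du$. Evaluating this elementary integral and simplifying gives precisely $x^1 = (-f^{2/3} - 2a)\sqrt{a - f^{2/3}} + d$. The converse direction is then a direct verification: back-substituting each of the two families into the reduced ODE (and hence into the original Kropina minimal surface PDE) confirms that both indeed describe minimal surfaces, completing the equivalence.
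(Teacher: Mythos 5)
Your proposal is correct and follows essentially the same route as the paper: specialize the characterization equation of Theorem \ref{th1} to the rotation surface, use the $x^2$-symmetry (with $A=\mathrm{diag}(1+f'^2,f^2)$, $E=b^2f^2$) to reduce to an ODE, which indeed factors as $\left(1-2f'^2\right)\left(1+3ff''+f'^2\right)=0$, yielding the cone branch and the nonlinear branch. Your anticipated first integral $(f')^2=af^{-2/3}-1$ is exactly what the paper derives (via $p=f'$, $f''=p\,dp/df$ and separation of variables), and your substitution $u=f^{2/3}$ correctly evaluates the quadrature to $x^1=(-f^{2/3}-2a)\sqrt{a-f^{2/3}}+d$, matching the paper's conclusion.
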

 In  Section $4$, we study the minimal surfaces of graph of a smooth function and obtain a Bernstein-type theorem as follows:
      \begin{thm}\label{thm2}
 If a minimal surface in the Kropina space $(\mathbb{R}^3,F)$ is the graph of a smooth function $f:U \subset \mathbb{R}^2\to \mathbb{R}$ then it  is a plane provided $f_{x_1}^2+f_{x_2}^2\ge 2$.
      \end{thm}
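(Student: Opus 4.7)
The plan is to substitute the graph parametrization $\varphi(x_1,x_2) = (x_1, x_2, f(x_1,x_2))$ into the characterizing PDE for minimal surfaces in the Kropina space $(\mathbb{R}^3, F = \alpha^2/\beta)$ derived in Section 2, and then to use the hypothesis $f_{x_1}^2 + f_{x_2}^2 \geq 2$ to force the Hessian of $f$ to vanish identically.

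First, I would compute on the graph the ingredients of the Section 2 PDE: the coordinate tangent vectors $\varphi_{x_1} = (1, 0, f_{x_1})$ and $\varphi_{x_2} = (0, 1, f_{x_2})$, the induced first fundamental form with entries $1 + f_{x_1}^2$, $f_{x_1} f_{x_2}$, $1 + f_{x_2}^2$, the restriction of $\beta = b\, dx^3$ (whose components are $b f_{x_1}$ and $b f_{x_2}$), and the Euclidean unit normal proportional to $(-f_{x_1}, -f_{x_2}, 1)$. Plugging these quantities, together with the Cartan tensor and spray coefficients of the Kropina metric evaluated along the appropriate direction, into the Section 2 PDE should reduce it to a quasilinear equation of the shape
\[
A(\nabla f)\, f_{x_1 x_1} + 2B(\nabla f)\, f_{x_1 x_2} + C(\nabla f)\, f_{x_2 x_2} = E(\nabla f),
\]
where $A$, $B$, $C$, $E$ are explicit rational functions of $f_{x_1}$, $f_{x_2}$ and the constant $b$.

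The analytic step is to show that, under the hypothesis $f_{x_1}^2 + f_{x_2}^2 \geq 2$, the quadratic form $A\xi^2 + 2B\xi\eta + C\eta^2$ is semi-definite with a fixed sign and the right-hand side $E$ is controlled accordingly. I expect the factor $f_{x_1}^2 + f_{x_2}^2 - 2$ to emerge naturally either in the discriminant $AC - B^2$ or in $E$, so that the threshold $|\nabla f|^2 = 2$ marks the change of type of the minimality equation. Under the stated inequality, the only way for the displayed identity to hold is that each of $f_{x_1 x_1}$, $f_{x_1 x_2}$, $f_{x_2 x_2}$ vanishes pointwise. Consequently $f$ is an affine function of $(x_1,x_2)$ and the surface is a plane.

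The principal obstacle is the algebraic reduction itself: simplifying the Cartan and spray contributions in the Section 2 PDE on the graph and isolating the precise combination that exhibits $f_{x_1}^2 + f_{x_2}^2 - 2$ as the natural threshold. Care will also be needed to verify that the graph avoids the singular locus of $F = \alpha^2/\beta$, which is guaranteed by the positivity of $b$ and by the magnitude condition on $\nabla f$. Once the PDE is in the displayed quasilinear form with the sign of its symbol pinned down by the hypothesis, the conclusion that $f$ is affine is a short algebraic consequence.
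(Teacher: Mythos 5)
Your reduction step is on track: substituting the graph into the Section 2 characterization does yield a quasilinear equation, namely (in the paper's notation, with $W^2 = 1+f_{x^1}^2+f_{x^2}^2$)
\[
\sum_{\epsilon,\eta=1,2}\Bigl[(W^2-1)(W^2-3)\Bigl(\delta_{\epsilon\eta}-\tfrac{f_{x^\epsilon}f_{x^\eta}}{W^2}\Bigr)+2(W^2+3)\tfrac{f_{x^\epsilon}f_{x^\eta}}{W^2}\Bigr]f_{x^\epsilon x^\eta}=0,
\]
which is homogeneous (your anticipated right-hand side $E(\nabla f)$ is actually zero, since the ambient Kropina metric is Minkowski), and you correctly sensed that the hypothesis $f_{x^1}^2+f_{x^2}^2\ge 2$, i.e.\ $W^2\ge 3$, is exactly what pins down the sign of the symbol: it makes the coefficient matrix positive definite, hence the equation elliptic.

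However, your final analytic step contains a genuine gap that breaks the proof. From a \emph{single} scalar equation $\sum a_{\epsilon\eta}f_{x^\epsilon x^\eta}=0$ with positive (semi-)definite coefficients $a_{\epsilon\eta}$ you cannot conclude that each entry of the Hessian vanishes pointwise: the equation is a trace of the Hessian against $a$, not a definite quadratic form in the Hessian entries. The Laplace equation $f_{x^1x^1}+f_{x^2x^2}=0$ and the classical minimal surface equation $(1+f_{x^2}^2)f_{x^1x^1}-2f_{x^1}f_{x^2}f_{x^1x^2}+(1+f_{x^1}^2)f_{x^2x^2}=0$ are both elliptic with definite symbol, yet possess abundant non-affine solutions; ellipticity alone never forces affineness. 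The Bernstein-type conclusion is a global phenomenon requiring deep PDE machinery. What the paper actually does at this point is verify that, under $W^2\ge 3$, the coefficients $a_{\epsilon\eta}$ (after dividing by $(W^2-1)(W^2-3)$ when $W^2>3$, with a separate degenerate case at $W^2=3$) satisfy L.~Simon's structure condition for \emph{equations of mean curvature type} --- the two-sided comparison of $\sum a_{\epsilon\eta}\xi_\epsilon\xi_\eta$ with the minimal-surface symbol $|\xi|^2-\frac{(p\cdot\xi)^2}{1+|p|^2}$, including a boundedness estimate as $|\nabla f|\to\infty$ --- and then invokes Simon's Bernstein-type theorem (Theorem 4.1 of his Acta Math.\ paper) to conclude the graph is a plane. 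Your proposal is missing this entire ingredient, and without it (or some substitute global argument) the claimed pointwise vanishing of the Hessian is simply false.
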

   In Section $5$, we study the minimal  translation surfaces and prove that plane is the only such surface.
         \begin{thm}\label{thm3}
    If a minimal translation surface in the Kropina space $(\mathbb{R}^3,F)$  is given by the immersion $\varphi : U \subset \mathbb{R}^2 \to (\mathbb{R}^3, F)$, $\varphi(x^1, x^2) =(x^1, x^2, f(x^1)+g (x^2))$ then it is a plane.
         \end{thm}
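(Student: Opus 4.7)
The plan is to apply the minimal surface characterization PDE derived in Section 2 to the translation immersion and exploit the absence of cross terms to separate variables. With $\varphi(x^1,x^2) = (x^1, x^2, f(x^1)+g(x^2))$, the tangent vectors are $\varphi_{x^1} = (1,0,f'(x^1))$ and $\varphi_{x^2} = (0,1,g'(x^2))$, so the induced Euclidean metric coefficients are $E = 1 + f'^2$, $F = f'g'$, $G = 1 + g'^2$, and the $1$-form $\beta = b\,dx^3$ pulls back to $bf'\,dx^1 + bg'\,dx^2$. Crucially $\varphi^3_{x^1 x^2} = 0$, so the Euclidean Hessian of the third coordinate is diagonal with entries $f''(x^1)$ and $g''(x^2)$; this forces every coefficient in the minimal surface equation to depend on $x^1$ only through $f',f''$ and on $x^2$ only through $g',g''$, with no coupling through a mixed derivative.

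Next, I would substitute all of these quantities into the characterization PDE and clear denominators, which will involve the Kropina-type contractions $\beta(\varphi_{x^1}) = bf'$ and $\beta(\varphi_{x^2}) = bg'$ together with the Euclidean area density $\sqrt{EG - F^2} = \sqrt{1+f'^2+g'^2}$. After expansion, the resulting polynomial identity in $f',g',f'',g''$ should be rearrangeable, after grouping, into a relation of the form $\Phi_1(f'(x^1),f''(x^1)) = \Phi_2(g'(x^2),g''(x^2))$, at which point the classical separation-of-variables argument forces each side to equal a common constant $c$.

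The main obstacle will be ruling out nonzero $c$ and showing that the only admissible solutions are affine $f$ and $g$. The strategy is to examine the precise algebraic structure of the minimal surface equation: one typically finds a factorization of the shape $f''(x^1)\,P(g'(x^2)) + g''(x^2)\,Q(f'(x^1)) + R(f'(x^1))\,S(g'(x^2)) = 0$, and by differentiating this identity successively in $x^1$ and in $x^2$ (or by evaluating at convenient points where $f'$ or $g'$ attains specific values) one forces $f'' \equiv 0$ and $g'' \equiv 0$ separately. Here the hypothesis $b>0$ is used to guarantee that $\beta$ does not vanish on tangent vectors generically, so that the denominators never degenerate and the separation is legitimate on the whole domain. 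Once $f'' \equiv 0 \equiv g''$, we obtain $f(x^1) = a_1 x^1 + b_1$ and $g(x^2) = a_2 x^2 + b_2$, and $\varphi$ parametrizes the plane $x^3 = a_1 x^1 + a_2 x^2 + (b_1 + b_2)$, completing the proof.
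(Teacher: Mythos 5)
Your setup is correct, but the crux of your argument --- the claim that after substitution the minimal surface equation can be grouped into a separated identity $\Phi_1(f',f'')=\Phi_2(g',g'')$, whereupon both sides equal a common constant --- is precisely where the proof breaks down. Substituting the translation immersion into the characterization equation actually yields $\lambda f_{x^1x^1}+\mu g_{x^2x^2}=0$ with
$\lambda = 8f_{x^1}^2+2f_{x^1}^2(f_{x^1}^2+g_{x^2}^2)+(1+g_{x^2}^2)(f_{x^1}^2+g_{x^2}^2)(f_{x^1}^2+g_{x^2}^2-2)$
and the symmetric expression for $\mu$: both coefficients depend jointly on $f'$ and $g'$ through the combination $f'^2+g'^2$, and neither splits as a function of $f'$ times a function of $g'$. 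Hence no rearrangement produces the separated form, and the ``common constant'' step never becomes available. Your fallback is also unsound as a general principle: even when an equation does have the shape $f''P(g')+g''Q(f')=0$, differentiating successively in $x^1$ and $x^2$ does not force $f''\equiv 0$ and $g''\equiv 0$ --- the Euclidean translation-surface equation $f''(1+g'^2)+g''(1+f'^2)=0$ has exactly this shape and admits the nonplanar Scherk surfaces. Ruling out nonaffine solutions must use the specific algebraic structure of $\lambda$ and $\mu$, not a generic differentiation or evaluation argument.

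What the paper does instead, and what your proposal is missing, is a Liouville-type integrability argument. Set $r=f_{x^1}^2$ and $s=g_{x^2}^2$, so the equation reads $r_f\lambda+s_g\mu=0$; if $r_f$ and $s_g$ are not identically zero, introduce $\kappa=r_f/\mu=-s_g/\lambda$ and impose the compatibility $(\log\kappa)_{fg}=(\log\kappa)_{gf}$, which (using $r_g=0$, $s_f=0$) gives $\bigl(\log(\lambda/\mu)\bigr)_{rs}=0$. In the variables $p=r+s$, $q=r-s$ one has $\lambda=K(p)-L(p)q$ and $\mu=K(p)+L(p)q$ for explicit polynomials $K,L$; equating to zero the coefficients of the resulting polynomial identity in $q$ forces $\bigl[(K/L)_p\bigr]^2=1$, which contradicts the explicitly computed rational function $(K/L)_p=1-4(p^2+34p+12)/(p^2-4p-8)^2$. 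Only this contradiction eliminates the nonlinear case and leaves $f$ and $g$ affine, i.e.\ the plane. Since your proposal neither reaches nor replaces this compatibility argument, it does not prove the theorem as written.
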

\section{Preliminaries}
Let $ M $ be an $n$-dimensional smooth manifold. $T_{x}M$ denotes the tangent space of $M$
 at $x$. The tangent bundle of $ M $ is the disjoint union of tangent spaces $ TM:= \sqcup _{x \in M}T_xM $. We denote the elements of $TM$ by $(x,y)$ where $y\in T_{x}M $ and $TM_0:=TM \setminus\left\lbrace 0\right\rbrace $. \\
 \begin{defn}
 \cite{SSZ} A Finsler metric on $M$ is a function $F:TM \to [0,\infty)$ satisfying the following condition:
 \\(i) $F$ is smooth on $TM_{0}$, 
 \\(ii) $F$ is a positively 1-homogeneous on the fibers of tangent bundle $TM$,
 \\(iii) The Hessian of $\frac{F^2}{2}$ with element $g_{ij}=\frac{1}{2}\frac{\partial ^2F^2}{\partial y^i \partial y^j}$ is positive definite on $TM_0$.\\
 The pair $(M,F)$ is called a Finsler space and $g_{ij}$ is called the fundamental tensor.
 \end{defn}
 In general, the explicit calculations of geometric objects in Finsler geometry is very tedious and complicated. This is perhaps one of the reason why the study of special Finsler spaces has attracted its attention and specailly the geometry of  Finsler metrics, namely, $(\alpha,\beta)$-metric, introduced by M. Matsumoto \cite{MM} has taken much attention in recent years.  The $(\alpha,\beta)$-metric is defined as, $F=\alpha\phi(\frac{\beta}{\alpha})$ where $\alpha$ is a Riemannian metric,  $\beta$ is a one form and $\phi$ is a smooth function. This class of Finsler metrics contains many interesting subclass of Finsler metrics such as Randers metrics, Matsumoto mountain metric, Kropina metric etc.

 For an $n$-dimensional Finsler manifold $(M^n,F)$, the Busemann-Hausdorff volume form is defined as $dV_{BH}=\sigma_{BH}(x)dx$, where
    \begin{equation}\label{eqn9}
    \sigma_{BH}(x)=\frac{vol(B^n(1))}{vol\left\lbrace (y^i)\in T_xM : F(x,y)< 1 \right\rbrace },
    \end{equation}
    $B^n(1)$ is the Euclidean unit ball in $\mathbb{R}^n$ and $vol$ is the Euclidean volume.
      \begin{proposition}\label{prop1}\cite{XZ}
      Let $F=\alpha\phi(s)$, $s=\beta/\alpha$, be an $(\alpha,\beta)$-metric on an $n$-dimensional manifold $M$. Then the Busemann-Hausdorff volume form $dV_F$ of the $(\alpha,\beta)$-metric $F$ is given by $$dV_F=\frac{\int\limits_{0}^{\pi}\sin^{n-2}(t)dt}{\int\limits_{0}^{\pi}\frac{\sin^{n-2}(t)}{\phi(b\cos (t))^n}dt} dV_{\alpha}$$
      where, $dV_{\alpha}=\sqrt{det(a_{ij})}dx$ denotes the volume form of Riemannian metric  $\alpha$.
      \end{proposition}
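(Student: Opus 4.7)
The plan is to compute the indicatrix volume $\mathrm{vol}\{y \in T_xM : F(x,y)<1\}$ directly from the definition of $\sigma_{BH}(x)$ and then rearrange into the Riemannian volume $dV_\alpha$. First, I would fix a point $x\in M$ and pick a local frame that is $\alpha$-orthonormal at $x$; after an additional orthogonal rotation I can assume the one-form is aligned with the first coordinate, so that $\alpha(y)=|y|$ (Euclidean norm) and $\beta(y)=by^1$, where $b=\|\beta\|_\alpha$. In this frame $\sqrt{\det(a_{ij})}=1$, so the coordinate volume $dx$ coincides with $dV_\alpha$ at $x$, which is what will later allow the formula to be written invariantly.

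Next I would introduce Euclidean polar coordinates $y=r\omega$ with $\omega\in S^{n-1}\subset\mathbb{R}^n$, and then further decompose $\omega$ by writing $\omega^1=\cos t$ with $t\in[0,\pi]$ and the remaining components parameterised by a point of $S^{n-2}$; this produces the standard factorisation $d\omega=\sin^{n-2}(t)\,dt\,d\sigma_{S^{n-2}}$. Since $F(y)=\alpha(y)\phi(\beta(y)/\alpha(y))=r\,\phi(b\cos t)$, the defining inequality $F(y)<1$ becomes simply $r<1/\phi(b\cos t)$, and the radial integral yields
\begin{equation*}
\mathrm{vol}\{F<1\}=\mathrm{vol}(S^{n-2})\int_0^\pi\!\sin^{n-2}(t)\!\int_0^{1/\phi(b\cos t)}\!r^{n-1}\,dr\,dt=\frac{\mathrm{vol}(S^{n-2})}{n}\int_0^\pi\frac{\sin^{n-2}(t)}{\phi(b\cos t)^n}\,dt.
\end{equation*}

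Finally, I would combine this with $\mathrm{vol}(B^n(1))=\mathrm{vol}(S^{n-1})/n$ and use the elementary identity $\mathrm{vol}(S^{n-1})/\mathrm{vol}(S^{n-2})=\int_0^\pi\sin^{n-2}(t)\,dt$ (which comes from the same $\omega^1=\cos t$ slicing applied to $S^{n-1}$ itself). Substituting into (\ref{eqn9}) gives
\begin{equation*}
\sigma_{BH}(x)=\frac{\int_0^\pi\sin^{n-2}(t)\,dt}{\int_0^\pi\sin^{n-2}(t)/\phi(b\cos t)^n\,dt},
\end{equation*}
and since this was computed in an $\alpha$-orthonormal frame, multiplying by $dx=dV_\alpha$ produces the stated formula. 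To transport it to arbitrary coordinates I would note that both $\sigma_{BH}(x)\,dx$ and $dV_\alpha=\sqrt{\det(a_{ij})}\,dx$ transform by the same Jacobian factor, so their ratio is a scalar and the identity holds globally.

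The only step that requires care is verifying that $b$ genuinely is the pointwise $\alpha$-norm of $\beta$ (so that $\beta(y)=b\cos t\cdot r$ in the chosen frame) and that the quantity $\phi(b\cos t)$ appearing in the integrand therefore depends only on the intrinsic data $(x,\phi,b(x))$ rather than on the frame chosen; once this invariance is checked the rest is a straightforward change of variables, so the computation presents no real obstacle.
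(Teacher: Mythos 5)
The paper offers no proof of this proposition at all---it is quoted verbatim from the cited reference \cite{XZ} (Cheng--Shen)---so there is no internal argument to compare against, but your computation is correct and is essentially the standard derivation found in that reference: choose an $\alpha$-orthonormal frame with $\beta=b\,dy^1$, slice the sphere by $\omega^1=\cos t$ so that $F(r\omega)=r\,\phi(b\cos t)$ and the radial integral gives $\frac{\mathrm{vol}(S^{n-2})}{n}\int_0^\pi \sin^{n-2}(t)\,\phi(b\cos t)^{-n}\,dt$ for the indicatrix volume, then cancel via $\mathrm{vol}(B^n)=\mathrm{vol}(S^{n-1})/n$ and $\mathrm{vol}(S^{n-1})/\mathrm{vol}(S^{n-2})=\int_0^\pi\sin^{n-2}(t)\,dt$. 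Your closing remarks---that $b=\|\beta\|_\alpha$ is frame-independent and that $\sigma_{BH}(x)\,dx$ and $\sqrt{\det(a_{ij})}\,dx$ transform by the same Jacobian, so the identity established in the orthonormal frame holds in arbitrary coordinates---are exactly the right points to check, and the argument is complete.
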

A Kropina metric on $M$ is a Finsler structure $F$ on $TM$ is given by $F=\frac{\alpha^2}{\beta}$, where $\alpha=\sqrt{a_{ij}y^iy^j}$ is a Riemannian metric and  $\beta=b_iy^i$ is a one-form with $b>0$.
\par Let $( \tilde{M}^m, \tilde{F})$ be a Finsler manifold, with local coordinates $(\tilde{x}^1, \dots, \tilde{x}^m)$ and 
$\varphi : M^n \to (\tilde{M}^m, \tilde{F})$ be an immersion. Then $\tilde{F}$ induces a Finsler metric
on $M$, defined by
\begin{equation}\label{eqn2.1}
F(x,y)=\left( \varphi^*\tilde{F}\right) (x,y)=\tilde{F}\left( \varphi(x),\varphi_*(y)\right) ,\quad \forall (x,y)\in TM.
\end{equation} 
\par The following convention is in order: the greek letters $\epsilon, \eta, \gamma, \tau, \dots$ are the indices ranging from $1$ to $n$ and the latin letters $i,j,k,l,\dots$ are the indices ranging from $1$ to $n+1$.
\par  A Minkowski space is the vector space $\mathbb{R}^n$ equipped with a Minkowski norm $F$ whose indicatrix is strongly convex. Equivalently, we can say that $F(x, y)$ depends only on $y \in T_x(\mathbb{R}^n)$. In this paper we consider the hypersurface $M^n$ in the Minkowski Kropina space $\mathbb{R}^{n+1}$ given by the immersion $\varphi : M^n \to (\mathbb{R}^{n+1}, F)$, where
 $F= \frac{\alpha^2}{\beta}$, $\alpha$ is the Euclidean metric,
and $\beta$ is a one-form with Euclidean norm $b>0$. Without loss of generality we consider $\beta = b dx^{n+1}$. If $M^n$ has local coordinates $x = (x^{\epsilon}), \epsilon= 1,... , n$, and
$\varphi(x) =\left(  \varphi^i(x^{\epsilon})\right)\in \mathbb{R}^{n+1} $, $i = 1,\dots, n + 1$, we define
\begin{equation}\label{eqn2.01}
\mathcal{F}(x,z)=\frac{vol (B^n)}{vol (D^n_x)},
\end{equation}
where,
\begin{equation}
D^n_x=\left\lbrace (y^1,y^2,...,y^n)\in \mathbb{R}^n:F(x,y)<1\right\rbrace, \quad y= y^{\epsilon}z^i_{\epsilon} \quad \textnormal{and} \quad z=\left( z^i_{\epsilon}\right)=\left( \frac{\partial \varphi^i}{\partial x^{\epsilon}}\right) 
\end{equation}
 The mean curvature $\mathcal{H}_{\varphi}$, for the immersion $\varphi$, along the vector $v$ introduced by Z. Shen \cite{ZS2} and is given by
 \begin{equation*}
 \mathcal{H}_{\varphi}(v)=\frac{1}{\mathcal{F}}\left\lbrace \frac{\partial^2 \mathcal{F}}{\partial z^i_{\epsilon}\partial z^j_{\eta}} \frac{\partial^2 \mathcal{\varphi}^j}{\partial x^{\epsilon}\partial x^{\eta}}+\frac{\partial^2 \mathcal{F}}{\partial z^i_{\epsilon}\partial \tilde{x}^j} \frac{\partial \mathcal{\varphi}^j}{\partial x^{\epsilon}} -\frac{\partial \mathcal{F}}{\partial \tilde{x}^i}\right\rbrace v^i.
 \end{equation*}
 Here $v=(v^i)$ is a vector field over $\mathbb{R}^{n+1}$.  $\mathcal{H}_{\varphi}(v)$ depends linearly on $v$ and the mean curvature vanishes on $\varphi_*(TM)$. Since, $(\mathbb{R}^{n+1} ,F)$ is a Minkowski space, $F=F(y)$. Hence, the expression of the mean curvature reduces to
 \begin{equation}\label{eqn2}
 \mathcal{H}_{\varphi}(v)=\frac{1}{\mathcal{F}}\left\lbrace \frac{\partial^2 \mathcal{F}}{\partial z^i_{\epsilon}\partial z^j_{\eta}} \frac{\partial^2 \mathcal{\varphi}}{\partial x^{\epsilon}\partial x^{\eta}}\right\rbrace v^i.
 \end{equation}
 The immersion $\varphi$ is said to be minimal when $\mathcal{H}_{\varphi}=0$.\\
 In this paper we consider an immersed surface in three dimensional Minkowski space. Using the definition of pullback metric given in \eqref{eqn2.1}, we show that if $\tilde{F}$ is a Kropina metric, then the induced pullback metric on the surface is again a Kropina metric.
 \begin{proposition}\label{ppn1}
Let $\varphi:M^2 \to (\mathbb{R}^3,\tilde{F}=\frac{\alpha^2}{\beta})$, where $\alpha$ is the Euclidean metric and $\beta=bdx^3$, $(b>0)$ be an immersion in a Kropina space with local coordinates $(\varphi^i(x^a))$. Then the pull back metric defined in \eqref{eqn2.1} is a Kropina metric.
 \end{proposition}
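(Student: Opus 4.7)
The plan is a direct unpacking of the pullback formula in \eqref{eqn2.1}. Write $\tilde F = \tilde\alpha^2/\tilde\beta$ on $\mathbb{R}^3$ with $\tilde\alpha^2(Y)=\sum_{i=1}^{3}(Y^i)^2$ and $\tilde\beta(Y)=b\,Y^3$. For $y=y^a\,\partial/\partial x^a\in T_xM^2$ ($a=1,2$), the pushforward is $\varphi_\ast(y)=y^a(\partial\varphi^i/\partial x^a)\,\partial/\partial\tilde x^i$, so by definition $F(x,y)=\tilde F(\varphi(x),\varphi_\ast(y))$.

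First I would plug $\varphi_\ast(y)$ into $\tilde\alpha^2$ to obtain
\[
\tilde\alpha^2(\varphi_\ast(y))=\sum_{i=1}^{3}\Bigl(y^a\tfrac{\partial\varphi^i}{\partial x^a}\Bigr)^{\!2}=a_{ab}(x)\,y^ay^b,\qquad a_{ab}(x):=\sum_{i=1}^{3}\tfrac{\partial\varphi^i}{\partial x^a}\tfrac{\partial\varphi^i}{\partial x^b},
\]
which is precisely the first fundamental form of the immersion. Because $\varphi$ is an immersion, the Jacobian $(\partial\varphi^i/\partial x^a)$ has rank $2$, hence $(a_{ab})$ is positive definite, so $\alpha:=\sqrt{a_{ab}y^ay^b}$ is a genuine Riemannian metric on $M^2$.

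Next I would evaluate $\tilde\beta$ on $\varphi_\ast(y)$, which gives
\[
\tilde\beta(\varphi_\ast(y))=b\,y^a\tfrac{\partial\varphi^3}{\partial x^a}=b_a(x)\,y^a,\qquad b_a(x):=b\,\tfrac{\partial\varphi^3}{\partial x^a},
\]
so $\beta:=b_ay^a$ is a one-form on $M^2$. Combining the two computations yields
\[
F(x,y)=\frac{a_{ab}y^ay^b}{b_ay^a}=\frac{\alpha^2}{\beta},
\]
which is the desired Kropina form.

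The only delicate point is checking the Kropina non-degeneracy condition. Since the inverse matrix $(a^{ab})$ is positive definite, the $\alpha$-norm squared of $\beta$ is $\|\beta\|_\alpha^2=a^{ab}b_ab_b\ge 0$, with equality iff $b_a\equiv 0$, i.e.\ iff $\varphi^3$ is locally constant along $M^2$. Provided the image of $\varphi$ is not contained in a hyperplane $\{x^3=\mathrm{const}\}$ (equivalently, $d\varphi^3\not\equiv 0$ on $TM$), we get $\|\beta\|_\alpha>0$, and $F=\alpha^2/\beta$ is a Kropina metric in the sense of the preliminaries. This last transversality is the only real obstacle; it is the natural analogue of the condition $b>0$ assumed for $\tilde F$ in the ambient space, and one assumes it throughout so that the induced structure is non-degenerate.
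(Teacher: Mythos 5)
Your proposal is correct and takes essentially the same route as the paper: a direct evaluation of $\tilde\alpha^2$ and $\tilde\beta$ on $\varphi_*(y)$, producing $\alpha^2=A_{\epsilon\delta}v^{\epsilon}v^{\delta}$ with $A_{\epsilon\delta}=\sum_{i=1}^{3}z^i_{\epsilon}z^i_{\delta}$ and $\beta=b\,z^3_{\eta}v^{\eta}$, exactly as in the paper's computation. Your closing verification that $(a_{ab})$ is positive definite by the rank-$2$ condition and that the induced one-form has positive $\alpha$-norm (which fails precisely when the image lies in a plane $x^3=\mathrm{const}$) is a refinement the paper omits entirely, not a different method.
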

\begin{proof}
Let $\varphi\left(x^1,x^2 \right) =\left( \varphi^1(x^1,x^2), \varphi^2(x^1,x^2), \varphi^3(x^1,x^2)\right) $ be an immersion. Then, for any tangent vector $v\in TM$, $(\phi^*(\tilde{F}))(v)=\tilde{F}(\phi_*v)=\frac{\delta_{ij}\frac{\partial \phi^i}{\partial x^{\epsilon}}\frac{\partial \phi^j}{\partial x^{\delta}}v^{\epsilon}v^{\delta}}{b\frac{\partial \phi^j}{\partial x^{\eta}}v^{\eta}}= \frac{A_{\epsilon\delta}v^{\epsilon}v^{\delta}}{bz^3_{\eta}v^{\eta}}$, where
  \begin{equation}\label{eqn2.0001}
  A=\left( A_{\tau\gamma}\right)=\left( \sum\limits_{i=1}^{3}z^i_{\tau}z^i_{\gamma}\right)
  \end{equation}
  Hence, $F=\phi^*(\tilde{F})$ is again a Kropina metric where, $\alpha^2=A_{\epsilon\delta}v^{\epsilon}v^{\delta}$ and $\beta=bz^3_{\eta}v^{\eta}$.
\end{proof}
\begin{thm}\label{th1}
Let $\varphi:M^2 \to (\mathbb{R}^3,F)$ be an immersion in a Kropina space with local coordinates $(\varphi^i(x^a))$. Then $\varphi$ is minimal if and only if 

\begin{equation} \label{eq1}
\begin{split}
\frac{\partial^2 \varphi^j}{\partial x^{\epsilon}\partial x^{\eta}}v^i\{ -2C^2E\frac{\partial^2 E}{\partial z^i_{\epsilon}\partial z^j_{\eta}}+4C^2\frac{\partial E}{\partial z^i_{\epsilon}}\frac{\partial E}{\partial z^j_{\eta}}-6CE\left(\frac{\partial C}{\partial z^i_{\epsilon}}\frac{\partial E}{\partial z^j_{\eta}}+\frac{\partial E}{\partial z^i_{\epsilon}}\frac{\partial C}{\partial z^j_{\eta}} \right)  \\-6CE^2\frac{\partial C}{\partial z^i_{\epsilon}}\frac{\partial C}{\partial z^j_{\eta}}+3E^2\frac{\partial^2 C^2}{\partial z^i_{\epsilon}\partial z^j_{\eta}} & \}=0,
 \end{split}
\end{equation}
where 
\begin{equation}\label{eqn1.01}
E=b^2\sum\limits_{k=1}^{3}(-1)^{\gamma + \tau}z^{k}_{\tilde{\gamma}}z^{k}_{\tilde{\tau}}z^{3}_{\gamma}z^{3}_{\tau}, \qquad \tilde{\tau}=\delta_{\tau 2}+2\delta_{\tau 1}, \quad C=\sqrt{det A}
\end{equation}
\end{thm}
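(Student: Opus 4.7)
The plan is to derive \eqref{eq1} directly from the mean-curvature formula \eqref{eqn2} by making the scalar $\mathcal{F}(x,z)$ in \eqref{eqn2.01} completely explicit for the induced Kropina metric on $M^2$. By Proposition \ref{ppn1} the pullback metric on the surface is again of Kropina form with $\alpha^2 = A_{\epsilon\delta} v^\epsilon v^\delta$, where $A=(A_{\tau\gamma})$ is the matrix in \eqref{eqn2.0001}, and $\beta = b\,z^3_\eta v^\eta$. The indicatrix region $D^2_x = \{u\in\mathbb{R}^2 : u^T A u < b\,z^3\!\cdot u\}$ is not centered at the origin, so I would compute its Euclidean area by hand: completing the square in $u$ identifies $D^2_x$ with the centered ellipse $\{w : w^T A w < \tfrac{b^2}{4}(z^3)^T A^{-1} z^3\}$, whose area is $\pi b^2 (z^3)^T A^{-1} z^3 /(4\sqrt{\det A})$. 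A direct expansion shows $b^2 (z^3)^T A^{-1} z^3 = E/C^2$, with $C=\sqrt{\det A}$ and $E$ as in \eqref{eqn1.01}, so that
\[
\mathcal{F}(x,z) \;=\; \frac{\mathrm{vol}(B^2)}{\mathrm{vol}(D^2_x)} \;=\; \frac{4\,C^3}{E}.
\]

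With this closed form in hand, the second step is to substitute into \eqref{eqn2} and expand. Since $\mathcal{F}\neq 0$, the condition $\mathcal{H}_\varphi(v)=0$ for every $v\in\mathbb{R}^3$ is equivalent to
\[
\frac{\partial^2 \mathcal{F}}{\partial z^i_\epsilon \partial z^j_\eta}\, \frac{\partial^2 \varphi^j}{\partial x^\epsilon \partial x^\eta}\, v^i \;=\; 0.
\]
Treating $\mathcal{F}$ as a function of the intermediate variables $C$ and $E$, a first application of the chain rule gives
\[
\frac{\partial \mathcal{F}}{\partial z^i_\epsilon} \;=\; \frac{12 C^2}{E}\,\frac{\partial C}{\partial z^i_\epsilon} - \frac{4 C^3}{E^2}\,\frac{\partial E}{\partial z^i_\epsilon},
\]
and differentiating once more produces a symmetric sum of six monomials in $\partial C,\partial E,\partial^2 C,\partial^2 E$ with coefficients that are rational in $C$ and $E$. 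Multiplying through by $E^3$ clears all denominators, and then applying the algebraic identity $\partial^2_{z^i_\epsilon z^j_\eta} C^2 = 2 C\,\partial^2_{z^i_\epsilon z^j_\eta} C + 2\,\partial_{z^i_\epsilon} C\,\partial_{z^j_\eta} C$ to rewrite the $\partial^2 C$ contribution as a $\partial^2 C^2$ contribution brings the equation into the polynomial form stated in \eqref{eq1}.

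The main obstacle is twofold. First, because the Kropina metric is a conic Finsler metric defined only where $\beta>0$, the standard formula of Proposition \ref{prop1} does not apply verbatim and $\mathrm{vol}(D^2_x)$ must be evaluated directly; recognizing the unit ``ball'' as an off-centered Euclidean ellipse and completing the square correctly is the key conceptual step and the place where the quantities $C$ and $E$ enter naturally. Second, the chain-rule expansion of $\partial^2\mathcal{F}$ produces many terms that must be carefully symmetrized in the pair $(i,\epsilon)\leftrightarrow(j,\eta)$ and reorganized so that $C$ appears only through $C^2=\det A$; this is pure bookkeeping, but it is where all the numerical coefficients in \eqref{eq1} are fixed. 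Once $\mathcal{F} = 4C^3/E$ is established and the indices are tracked with care, the remainder of the argument is routine algebra.
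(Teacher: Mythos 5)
Your proposal is correct, and its second half coincides with the paper's own argument: the paper likewise establishes the closed form $\mathcal{F}=4C^3/E$ (its equation \eqref{eqn01}), differentiates it twice with the help of the identity \eqref{eqn1} to get \eqref{eqn3}, and substitutes into Shen's Minkowski-space mean-curvature formula \eqref{eqn2}; since $C>0$ and $E>0$ for a genuine Kropina pullback, clearing the positive denominators preserves the ``if and only if.'' Where you genuinely diverge is in how the closed form for $\mathcal{F}$ is obtained. The paper invokes Proposition \ref{prop1} with $\phi(s)=1/s$, $n=2$, writing $dV_{BH}=\frac{\int_0^{\pi}dt}{\int_0^{\pi}(b'\cos t)^2dt}\sqrt{\det A}\,dx=\frac{4}{b'^2}\sqrt{\det A}\,dx$; but the displayed integral actually evaluates to $\frac{2}{b'^2}\sqrt{\det A}$, and the constant $4$ is correct only once one accounts for the conic nature of the Kropina metric (the indicatrix lies entirely in the half-plane $\beta>0$, so directions with $\cos t<0$ contribute zero volume rather than $(b'\cos t)^2$). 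Your direct computation --- recognizing $D^2_x$ as the off-centered ellipse $\{u : u^TAu< b\,z^3\!\cdot u\}$, completing the square, computing the area $\pi b^2(z^3)^TA^{-1}z^3/(4\sqrt{\det A})$, and checking via the adjugate that $b^2(z^3)^TA^{-1}z^3=E/C^2$ --- is rigorous exactly at this point and in effect supplies the justification the paper's citation of Proposition \ref{prop1} skips over. That added care is what your route buys; otherwise the two proofs are the same.

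One bookkeeping caveat: if you actually carry out the expansion you describe, the $\partial C\,\partial C$ term comes out as $+6E^2\frac{\partial C}{\partial z^i_{\epsilon}}\frac{\partial C}{\partial z^j_{\eta}}$ after multiplying $\frac{\partial^2\mathcal{F}}{\partial z^i_{\epsilon}\partial z^j_{\eta}}$ by the positive factor $\frac{E^3}{2C}$ (your factor $E^3$ leaves an overall harmless $2C$), whereas \eqref{eq1} as printed shows $-6CE^2\frac{\partial C}{\partial z^i_{\epsilon}}\frac{\partial C}{\partial z^j_{\eta}}$. The printed sign and the extra factor of $C$ are inconsistent with the paper's own \eqref{eqn3} and appear to be a typo, immaterial in the paper's later applications since there $\frac{\partial C}{\partial z^i_{\epsilon}}v^i=0$ kills that term. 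So your ``routine algebra'' would land on the corrected form of \eqref{eq1} rather than the literal one; you should state that explicitly instead of asserting exact agreement with the displayed coefficients.
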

\begin{proof}
For Kropina surface we have $\phi(s)=\frac{1}{s}$ and $n=2$. Therefore, we have
\begin{equation}\label{eqn3.01}
\begin{split}
 dV_{BH} = \frac{\int\limits_{0}^{\pi}dt}{\int\limits_{0}^{\pi}(b'\cos t)^2dt}\sqrt{det(A)}dx=\frac{4}{b'^2}\sqrt{det(A)}dx
\end{split}
\end{equation}
Here, $b'^2=b^2A^{\epsilon\delta}z^3_{\epsilon}z^3_{\delta}$ is the norm of $\beta$ with respect to the pullback Kropina metric $F$. Hence, the Euclidean volume of $D^n_x$ is given by
  \begin{equation}\label{eqn3.02}
  vol (D^n_x):=4\frac{vol (B^n)\sqrt{det A}}{b^2A^{\epsilon\eta}z^3_{\epsilon}z^3_{\eta}},
  \end{equation}
  \begin{equation}\label{eqn2.001}
  A=\left( A_{\tau\gamma}\right)=\left( \sum\limits_{i=1}^{3}z^i_{\tau}z^i_{\gamma}\right)
  \end{equation}
Therefore, using  \eqref{eqn2.01} and \eqref{eqn1.01} in \eqref{eqn3.02}  we have 
\begin{equation}\label{eqn01}
\mathcal{F}(x,z)=\frac{4C^3}{E}
\end{equation}
It should be noted that
\begin{equation}\label{eqn1}
\frac{\partial^2 C^2}{\partial z^i_{\epsilon}\partial z^j_{\eta}}=\frac{\partial}{\partial z^j_{\eta}}\left( 2C\frac{\partial C}{\partial z^i_{\epsilon}}\right)=2\frac{\partial C}{\partial  z^i_{\epsilon}}\frac{\partial C}{\partial z^j_{\eta}}+2C\frac{\partial^2 C}{\partial z^i_{\epsilon} \partial z^j_{\eta}}
\end{equation}
Now differentiating \eqref{eqn01} twice first with respect to $z^i_{\epsilon}$ and then with respect to $z^j_{\eta}$ and using \eqref{eqn1} we get  
\begin{equation}\label{eqn3}
\begin{split}
\frac{\partial^2 \mathcal{F}}{\partial z^i_{\epsilon}\partial z^j_{\eta}}
=\frac{8C^3}{E^3}\frac{\partial E}{\partial z^i_{\epsilon}}\frac{\partial E}{\partial z^j_{\eta}}-\frac{12C^2}{E^2}\left( \frac{\partial C}{\partial  z^j_{\eta} }\frac{\partial E}{ \partial z^i_{\epsilon}}+\frac{\partial E}{\partial  z^j_{\eta} }\frac{\partial C}{ \partial z^i_{\epsilon}}\right) -\frac{4C^3}{E^2}\frac{\partial^2 E}{\partial  z^i_{\epsilon}\partial  z^j_{\eta}}\\+\frac{12C}{E}\frac{\partial C}{\partial z^j_{\eta}}\frac{\partial C}{\partial z^i_{\epsilon}}+\frac{6C}{E}\frac{\partial^2 C^2}{\partial z^i_{\epsilon}\partial z^j_{\eta}}
\end{split} 
\end{equation}

Now using \eqref{eqn3} in \eqref{eqn2} we obtain the proof of the theorem.
\end{proof}
\section{Minimal surfaces of revolution}
In this section we consider the surface of revolution given by the immersion $\varphi: M^2\to \mathbb{R}^3$  such that $\varphi(x^1,x^2):=\left( \varphi^i(x^{\epsilon})\right)=(f(x^1)\cos x^2, f(x^1)\sin x^2, x^1) $, with $f(x^1)>0$. The curve $(x^1,f(x^1))$ is the generating curve for the surface of revolution. Here we note that, the mean curvature vanishes on tangent vectors of the immersion $\varphi$. Therefore, we only need to consider a vector field $v$
such that the set $\left\lbrace \varphi_{x^1}, \varphi_{x^2}, v \right\rbrace $ is linearly independent. Hence, we consider $v =(-\cos x^2,-\sin x^2, f'(x^1))$. 
Therefore, the coordinates of the vector $v$ can be written as
\begin{equation}
v^i=-\delta_{i 1} \cos x^2-\delta_{i 2} \sin x^2+\delta_{i 3}f'(x^1).
\end{equation}
We also have
\begin{equation}
\begin{split}
z^i_{\epsilon}:=\frac{\partial \varphi^i}{\partial x^{\epsilon}}=\delta_{\epsilon 1}[\delta_{i1}f'(x^1)\cos x^2+\delta_{i2}f'(x^1)\sin x^2 +\delta_{i3}]\\+\delta_{\epsilon 2}[-\delta_{i1}f(x^1)\sin x^2+\delta_{i2}f(x^1)\cos x^2]
\end{split}
\end{equation}
Therefore, in view of \eqref{eqn2.001} we  obtain the followings:
\begin{equation}
A = 
\begin{pmatrix}
1+(f'(x^1))^2 & 0 \\
0 & (f(x^1)^2 \\
\end{pmatrix}
\end{equation}
\begin{equation}
C^2=f^2(x^1)(1+f'^2(x^1)), \quad E=b^2f^2(x^1).
\end{equation}
From the above values of $A$, $C$ and $E$ we further obtain
\begin{equation}
\frac{\partial C}{\partial z^i_{\epsilon}}v^i=0
\end{equation}
\begin{equation}
\frac{\partial E}{\partial z^i_{\epsilon}}v^i= 2b^2\delta_{\epsilon 1}f^2(x^1)f'(x^1)
\end{equation}
\begin{equation}
\frac{\partial C}{\partial z^j_{\eta}}\frac{\partial^2\varphi^j} {\partial x_{\epsilon}\partial x_{\eta}}=\frac{f'(x^1)}{\sqrt{1+f'^2(x^1)}}\delta_{\epsilon 1}\left[ f(x^1)f''(x^1)+1+f'^2(x^1)\right]  
\end{equation}
\begin{equation}
\frac{\partial E}{\partial z^j_{\eta}}\frac{\partial^2\varphi^j} {\partial x_{\epsilon}\partial x_{\eta}}= 2b^2\delta_{\epsilon 1}f(x^1)f'(x^1)
\end{equation}
\begin{equation}
\frac{\partial^2 E}{\partial z^i_{\epsilon}\partial z^j_{\eta}}\frac{\partial^2\varphi^j} {\partial x_{\epsilon}\partial x_{\eta}}v^i=  2b^2f(x^1)\left[ 1+2f'^2(x^1)\right] 
\end{equation}
\begin{equation}
\frac{1}{2}\frac{\partial^2 C^2}{\partial z^i_{\epsilon}\partial z^j_{\eta}}\frac{\partial^2\varphi^j} {\partial x_{\epsilon}\partial x_{\eta}}v^i= f(x^1)\left[ -f(x^1)f''(x^1)+1+f'^2(x^1)\right] 
\end{equation}
Then putting all these values together in \eqref{eq1} and after some simplifications we have
\begin{equation}
\left( 1-2f'^2\right)\left(1+3ff''+f'^2 \right) =0 
\end{equation}
Therefore, either $\left( 1-2f'^2\right)=0$, or,  $1+3ff''+f'^2 =0$

If $\left( 1-2f'^2\right)=0$ then $f'(x^1)=\frac{1}{\sqrt{2}}$. Therefore, the minimal surface is an open cone with the generating line $f(x^1)=\frac{1}{\sqrt{2}}x^1+c$. Here $c$ is a positive real number.\\
Now suppose $1+3ff''+f'^2 =0$.
Let us consider $p=f'$. Then $f''=p\frac{dp}{df}$. Substituting these values in the above ODE it reduces to
\begin{equation}
1+3fp\frac{dp}{df}+p^2=0
\end{equation}
\begin{equation}
i.e. \frac{3p}{1+p^2}dp+\frac{1}{f}df=0
\end{equation}
Integrating we have,
\begin{equation*}
\frac{3}{2}\ln (1+p^2)+ \ln f =\frac{3}{2}\ln a, \quad \textnormal{where $a$ is an integrating constant}.
\end{equation*}
Simplifying we have,
\begin{equation}
p=\sqrt{af^{-2/3}-1}
\end{equation}
\begin{equation}
i.e.  \frac{df}{dx^1}=\sqrt{af^{-2/3}-1}
\end{equation}
Integrating, we have
\begin{equation}
x^1=(-f^{2/3}-2a)\sqrt{a-f^{2/3}}+d, \quad \textnormal{where $d$ is an integrating constant}.
\end{equation}
This is the precise generating curve for which the surface becomes a Finsler minimal surface and therefore we obtain the proof of the Theorem \ref{thm1}.
\section{Graph of a smooth function}
In this section we study the graph of a smooth function $f:U \subset \mathbb{R}^2\to \mathbb{R}$ in the Kropina space $(\mathbb{R}^3, F)$, where $\tilde{\alpha}$ is the Euclidean metric and $\tilde{\beta}=bdx^3$ is a one-form with $b>0$. Here we consider the immersion $\varphi : U \subset \mathbb{R}^2 \to (\mathbb{R}^3, F)$ given by $\varphi(x^1, x^2) =(x^1, x^2, f(x^1, x^2))$. Before proving Theorem 1.2. we need the following proposition: 

 \begin{proposition}\label{thm4.1}
 An immersion $\varphi : U \subset \mathbb{R}^2 \to (\mathbb{R}^3, F)$ given by $\varphi(x^1, x^2) =(x^1, x^2, f(x^1, x^2))$, where $f$ is a real valued smooth function on $U\subset \mathbb{R}^2$ is minimal, if and only if, $f$ satisfies
\begin{equation}\label{eqn4.0}
\sum\limits_{ \epsilon, \eta=1,2}\left[ (W^2-1)(W^2-3)\left(\delta_{\epsilon \eta}-\frac{f_{x^{\epsilon}}f_{x^{\eta}}}{W^2} \right)+ 2(W^2+3) \frac{f_{x^{\epsilon}}f_{x^{\eta}}}{W^2} \right] f_{x^{\epsilon}x^{\eta}}= 0,
\end{equation}
 where, $W^2 = 1 + f^2_{x^1}+ f^2_{x^2}$.
  \end{proposition}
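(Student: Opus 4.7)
The plan is to specialize Theorem \ref{th1} to the graph parametrization and show that its PDE reduces to \eqref{eqn4.0}. I would first compute the Jacobian entries $z^i_\epsilon=\partial\varphi^i/\partial x^\epsilon$: for $\varphi=(x^1,x^2,f)$ these are $z^1_1=z^2_2=1$, $z^2_1=z^1_2=0$, and $z^3_\epsilon=f_{x^\epsilon}$. From \eqref{eqn2.001} this gives $A_{\epsilon\eta}=\delta_{\epsilon\eta}+f_{x^\epsilon}f_{x^\eta}$, hence $C^2=\det A=W^2$. Expanding \eqref{eqn1.01} for $n=2$ as $E=b^2\bigl(A_{22}(z^3_1)^2-2A_{12}z^3_1 z^3_2+A_{11}(z^3_2)^2\bigr)$ and substituting the graph values yields $E=b^2(W^2-1)$. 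The only nonvanishing second derivative of $\varphi$ is $\partial^2\varphi^3/\partial x^\epsilon\partial x^\eta = f_{x^\epsilon x^\eta}$; since $\mathcal{H}_\varphi$ is linear in $v$ and vanishes on tangent vectors, I would choose $v^i=\delta_{i3}$, which is transverse to $\varphi_{\ast}(T_xM)$ because $\delta_{i3}\cdot(-f_{x^1},-f_{x^2},1)=1$. With these choices, \eqref{eq1} collapses into a contraction of $f_{x^\epsilon x^\eta}$ against the bracket evaluated at $i=j=3$.

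The core step is then to evaluate the partial derivatives of $C$ and $E$ with respect to $z^3_\epsilon$ that appear in the bracket. Direct differentiation of $C^2=A_{11}A_{22}-A_{12}^2$ and of the explicit quadratic-in-$z^3$ form of $E$, combined with the graph identities $A_{11}-(z^3_1)^2=1$, $A_{22}-(z^3_2)^2=1$, and $A_{12}=z^3_1 z^3_2$, should give the compact values
\begin{equation*}
\frac{\partial C}{\partial z^3_\epsilon}=\frac{f_{x^\epsilon}}{W},\qquad \frac{\partial E}{\partial z^3_\epsilon}=2b^2 f_{x^\epsilon},\qquad \frac{\partial^2 C^2}{\partial z^3_\epsilon\partial z^3_\eta}=2\delta_{\epsilon\eta},\qquad \frac{\partial^2 E}{\partial z^3_\epsilon\partial z^3_\eta}=2b^2\delta_{\epsilon\eta}.
\end{equation*}
The useful structural observation is that all four quantities are built out of just two tensors, $\delta_{\epsilon\eta}$ and $f_{x^\epsilon}$, which is what allows the final expression to organize as a $\delta_{\epsilon\eta}$-part (from the two Hessian pieces) plus a $f_{x^\epsilon}f_{x^\eta}$-part (from products of first derivatives).

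Finally, substituting the above together with $C^2=W^2$ and $E=b^2(W^2-1)$ into the five-term bracket of \eqref{eq1} and factoring out $b^4$, the result takes the shape $\bigl[P(W^2)\,\delta_{\epsilon\eta}+W^{-2}Q(W^2)\,f_{x^\epsilon}f_{x^\eta}\bigr]f_{x^\epsilon x^\eta}=0$. The coefficient $P(W^2)$ should combine the $-2C^2E\,\partial^2 E$ and $3E^2\,\partial^2 C^2$ contributions into a scalar multiple of $(W^2-1)(W^2-3)$, while $Q(W^2)$ should combine the $\partial E\,\partial E$, mixed $\partial C\,\partial E$, and $\partial C\,\partial C$ contributions into a multiple of $2(W^2+3)-(W^2-1)(W^2-3)$; after recognizing this matches the expansion $(W^2-1)(W^2-3)\bigl(\delta_{\epsilon\eta}-\tfrac{f_{x^\epsilon}f_{x^\eta}}{W^2}\bigr)+2(W^2+3)\tfrac{f_{x^\epsilon}f_{x^\eta}}{W^2}$, the equivalence with \eqref{eqn4.0} follows. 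The main obstacle is the algebraic bookkeeping of the five weighted terms: the $1/W$ and $1/W^2$ factors coming from $\partial C/\partial z^3_\epsilon$ must cancel consistently, and the polynomial identity on the $f_{x^\epsilon}f_{x^\eta}$-coefficient only emerges after the systematic substitution $E=b^2(W^2-1)$ to eliminate $E$ in favor of $W$ throughout.
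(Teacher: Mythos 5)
Your proposal is correct in substance and follows the same overall strategy as the paper: specialize Theorem \ref{th1} to the graph immersion, compute $A$, $C=W$, $E=b^2(W^2-1)$, and contract the five-term bracket against the Hessian of $f$. The one genuine difference is your choice of transverse vector: you take $v=(0,0,1)$, while the paper takes the Euclidean normal $v=\varphi_{x^1}\times\varphi_{x^2}=(-f_{x^1},-f_{x^2},1)$. Your choice is legitimate, since by linearity of $\mathcal{H}_{\varphi}$ and its vanishing on $\varphi_*(TM)$ one has $(0,0,1)=W^{-2}\left(v_{\mathrm{normal}}+f_{x^1}\varphi_{x^1}+f_{x^2}\varphi_{x^2}\right)$, so $\mathcal{H}_{\varphi}((0,0,1))=W^{-2}\mathcal{H}_{\varphi}(v_{\mathrm{normal}})$ and the two vanishing conditions are equivalent. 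It also buys a real bookkeeping simplification: only the $i=j=3$ block of the bracket is needed, and your listed values $\partial C/\partial z^3_{\epsilon}=f_{x^{\epsilon}}/W$, $\partial E/\partial z^3_{\epsilon}=2b^2f_{x^{\epsilon}}$, $\partial^2E/\partial z^3_{\epsilon}\partial z^3_{\eta}=2b^2\delta_{\epsilon\eta}$, $\partial^2C^2/\partial z^3_{\epsilon}\partial z^3_{\eta}=2\delta_{\epsilon\eta}$ all check out. Carrying out the substitution gives exactly $2b^4$ times the coefficient tensor of \eqref{eqn4.0}: the $\delta_{\epsilon\eta}$-part is $-4W^2(W^2-1)+6(W^2-1)^2=2(W^2-1)(W^2-3)$ and the $f_{x^{\epsilon}}f_{x^{\eta}}$-part is $\frac{2}{W^2}\left[8W^4-12W^2(W^2-1)+3(W^2-1)^2\right]=\frac{2}{W^2}\left[2(W^2+3)-(W^2-1)(W^2-3)\right]$, precisely the combination you predicted. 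By contrast, the paper's normal $v$ makes $\frac{\partial C}{\partial z^i_{\epsilon}}v^i=0$, killing two of the five terms at the outset but forcing it to track $i=1,2$ contributions in the remaining contractions.

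One concrete caution: your route is sensitive to a misprint in \eqref{eq1} that the paper's route is immune to. Multiplying \eqref{eqn3} by $E^3/(2C)$ produces the term $+6E^2\frac{\partial C}{\partial z^i_{\epsilon}}\frac{\partial C}{\partial z^j_{\eta}}$, not the printed $-6CE^2\frac{\partial C}{\partial z^i_{\epsilon}}\frac{\partial C}{\partial z^j_{\eta}}$ (the printed term is also inhomogeneous in $z$, of degree $12$ where the other four terms have degree $10$). The paper never feels this because with its normal choice $\frac{\partial C}{\partial z^i_{\epsilon}}v^i=0$, so the term drops out entirely; with your $v=(0,0,1)$ one has $\frac{\partial C}{\partial z^3_{\epsilon}}=f_{x^{\epsilon}}/W\neq 0$, the term survives, and substituting the coefficient as printed would leave an uncancelled odd power of $W$ and fail to reproduce \eqref{eqn4.0}. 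Your predicted final identity matches the corrected coefficient, so your outline is right; to make it airtight you should derive the bracket directly from \eqref{eqn3} (or state the corrected version of \eqref{eq1}) rather than quoting \eqref{eq1} verbatim.
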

 \begin{proof}
  As we already know that the mean curvature vanishes along the tangent vectors of the immersed surface, we need to consider a vector field $v$ such that the set $\left\lbrace v, \varphi_{x^1}, \varphi_{x^2} \right\rbrace $ is linearly independent. Therefore, we consider $v=\varphi_{x^1}\times \varphi_{x^2}$. Then $v=\left( v^1,v^2,v^3\right)=\left(-f_{x^1}, -f_{x^2}, 1 \right)$.
 From \eqref{eqn2.001} we obtain the following:
 \begin{equation}\label{eqn4.1}
 A = 
 \begin{pmatrix}
 1+f^2_{x^1} & f_{x^1}f_{x^2} \\
 f_{x^1}f_{x^2} & 1+f^2_{x^2} \\
 \end{pmatrix}, \quad  C=\sqrt{det A}=W, \quad E=b^2\left( W^2-1\right),
 \end{equation}
  By some simple calculations we can have
 \begin{equation}\label{eqn4.3}
 \frac{\partial C}{\partial z^i_{\epsilon}}v^i=0, \qquad  \frac{\partial E}{\partial z^i_{\epsilon}}v^i= 2b^2(\delta_{\epsilon 1}f_{x^1}+\delta_{\epsilon 2}f_{x^2}),
 \end{equation}
 \begin{equation}\label{eqn4.5}
 \frac{\partial C}{\partial z^j_{\eta}}\frac{\partial^2\varphi^j} {\partial x^{\epsilon}\partial x^{\eta}}v^i= \frac{f_{x^1}f_{x^{\epsilon}x^1}+f_{x^2}f_{x^{\epsilon}x^2}}{W},
 \end{equation}
 \begin{equation}\label{eqn4.6}
 \frac{\partial E}{\partial z^j_{\eta}}\frac{\partial^2\varphi^j} {\partial x^{\epsilon}\partial x^{\eta}}= 2b^2(f_{x^1}f_{x^{\epsilon}x^1}+f_{x^2}f_{x^{\epsilon}x^2}),
 \end{equation}
 \begin{equation}\label{eqn4.7}
 \frac{\partial^2 E}{\partial z^i_{\epsilon}\partial z^j_{\eta}}\frac{\partial^2\varphi^j} {\partial x^{\epsilon}\partial x^{\eta}}v^i= 2b^2\left[ (1+f^2_{x^2})f_{x^1x^1}-2f_{x^1}f_{x^2}f_{x^1x^2}+(1+f^2_{x^1})f_{x^2x^2}\right], 
 \end{equation}
 \begin{equation}\label{eqn4.8}
 \frac{1}{2}\frac{\partial^2 C^2}{\partial z^i_{\epsilon}\partial z^j_{\eta}}\frac{\partial^2\varphi^j} {\partial x^{\epsilon}\partial x^{\eta}}v^i= \left[ (1+f^2_{x^2})f_{x^1x^1}-2f_{x^1}f_{x^2}f_{x^1x^2}+(1+f^2_{x^1})f_{x^2x^2}\right].
 \end{equation}
 Then putting all these values in \eqref{eq1} we get \eqref{eqn4.0}. Hence, we complete the proof.
   \end{proof}
\begin{rem}
If $W^2$ is constant then we have $\left(\frac{\partial f}{\partial x^1} \right)^2 +\left( \frac{\partial f}{\partial  x^2}\right)^2=k$, where $k$ is some real number. Then solving this partial differential equation we get $f(x^1,x^2)=ax^1+\sqrt{k-a^2}x^2+c$, where $a$ and $c$ are arbitrary constants. Therefore, the surface becomes a plane and clearly it satisfies \eqref{eqn4.0}. Hence, it is minimal.
\end{rem}
\begin{defn}\cite{LS1}
A differential equation is said to be a elliptic equation of mean curvature type on a domain $\Omega\subset \mathbb{R}^2$ if 
\begin{equation}\label{eqn4.2}
\sum\limits_{ \epsilon,\eta=1,2}a_{\epsilon\eta}(x,f,\nabla f)f_{x^{\epsilon}x^{\eta}}=0
\end{equation}
where $a_{\epsilon\eta}, \epsilon,\eta = 1, 2$ are given real-valued functions on $\Omega \times \mathbb{R} \times \mathbb{R}^2$, $x\in \Omega$, $f:\Omega \to \mathbb{R}$  with
\begin{equation}\label{eqn4.03}
|\xi|^2- \frac{(p\cdot \xi)^2}{1+|p|^2}\sum\limits_{ \epsilon,\eta=1,2}a_{\epsilon\eta}(x,u,p)\xi_{\epsilon}\xi_{\eta}\le\left(1+\mathcal{C} \right) \left[|\xi|^2- \frac{(p\cdot\xi)^2}{1+|p|^2}\right] 
\end{equation}
for all $u \in \mathbb{R}$, $p \in \mathbb{R}^2$ and $\xi \in \mathbb{R}^2\setminus \left\lbrace 0 \right\rbrace $.
 \end{defn}
Therefore, we can write equation \eqref{thm4.1} as
\begin{equation}\label{eqn4.34}
\sum\limits_{ \epsilon,\eta=1,2}a_{\epsilon\eta}(x,f,\nabla f)f_{x^{\epsilon}x^{\eta}}=0
\end{equation}
where,
 \begin{equation}\label{eqn3.1}
a_{\epsilon\eta} := \begin{cases} \delta_{\epsilon\eta}-\frac{f_{x^{\epsilon}}f_{x^{\eta}}}{W^2}+2\frac{W^2+3}{(W^2-1)(W^2-3)}\left(\frac{f_{x^{\epsilon}}f_{x^{\eta}}}{W^2}\right) &\mbox{if } W^2>3 \\
(W^2+3)\left(\frac{f_{x^{\epsilon}}f_{x^{\eta}}}{W^2}\right)  &\mbox{if } W^2=3 \end{cases}
 \end{equation}
 \textbf{Case-1:} Now let us assume $W^2=3$. Then clearly, \eqref{eqn4.0} satisfies \eqref{eqn4.03}.\\
\textbf{Case-2:} Now assume $W^2>3$. Let us consider $\xi\in \mathbb{R}^2\setminus \left\lbrace 0 \right\rbrace $, $x,t\in \mathbb{R}^2$ and $u\in \mathbb{R}$ and we define
\begin{equation}
h_{\epsilon\eta}(u)= \delta_{\epsilon\eta}-\frac{t_{\epsilon}t_{\eta}}{W^2(u)}.
\end{equation}
Hence, we have,
\begin{equation}\label{eqn5.36}
\sum\limits_{\epsilon,\eta=1}^{2}h_{\epsilon\eta}(t)\xi_{\epsilon}\xi_{\eta}=\frac{|\xi|^2}{W^2}(1+|t|^2\sin^2 \theta),
\end{equation}
where, $\theta$ is the angle function between $t$ and $\xi$. We also have from 
\begin{equation}\label{eqn5.37}
\sum\limits_{\epsilon,\eta=1}^{2}a_{\epsilon\eta}(x,u,t)\xi_{\epsilon}\xi_{\eta}=\sum\limits_{\epsilon\eta=1}^{2}h_{\epsilon\eta}(t)\xi_{\epsilon}\xi_{\eta}+2\frac{W^2+3}{(W^2-1)(W^2-3)}\left[\frac{t\cdot\xi }{W}\right]^2,
\end{equation}
where $\cdot$ represents the Euclidean inner product.\\
Hence, it is evident from \eqref{eqn5.36} and \eqref{eqn5.37} that for all $\xi\in \mathbb{R}^2\setminus \left\lbrace 0 \right\rbrace $,
\begin{equation}\label{eqn4.37}
\sum\limits_{\epsilon,\eta=1}^{2}a_{\epsilon\eta}(x,u,t)\xi_{\epsilon}\xi_{\eta}>0.
\end{equation}
Hence, \eqref{eqn4.34} is an elliptic equation.\\
Now we prove that it is a differential equation of mean curvature type for which we need to show that there exists a constant $\mathcal{C}$ such that, for all
\begin{equation}
\sum\limits_{\epsilon, \eta=1}^{2}h_{\epsilon \eta}(x,u,t)\xi_{\epsilon}\xi_{\eta} \le \sum\limits_{\epsilon, \eta=1}^{2}a_{\epsilon  \eta}(x,u,t)\xi_{\epsilon}\xi_{\eta}\le(1+\mathcal{C})\sum\limits_{\epsilon, \eta=1}^{2}h_{\epsilon \eta}(x,u,t)\xi_{\epsilon}\xi_{\eta}.
\end{equation}
The first inequality is immediate from \eqref{eqn4.37}. To prove the second inequality we need to show that 
\begin{equation}
2\frac{W^2+3}{(W^2-1)(W^2-3)}\left[\frac{t\cdot\xi }{W}\right]^2\le\mathcal{C}\sum\limits_{\epsilon,\eta=1}^{2}h_{\epsilon\eta}(x,u,t)\xi_{\epsilon}\xi_{\eta},
\end{equation}
That is, we need to show that 
\begin{equation}\label{eqn5.11}
\frac{2(W^2+3)|t|^2}{(W^2-1)(W^2-3)(1+|t|^2\sin^2\theta)}\le\mathcal{C}
\end{equation}
The left hand side of \eqref{eqn5.11} is a rational function of $|t|$. Here the numerator is of degree less than or equal to $4$, and denominator is of degree $6$. Therefore, it is a bounded function of $|t|$  as $|t|$ goes to infinity.\\
Now if $W^2>m$, for some real number $m>3$, then again the left hand side of \eqref{eqn5.11} is bounded by the similar reason given above.\\
Therefore, from the above two cases and by the continuity of $W^2$ it can be said that \eqref{eqn4.0} satisfies \eqref{eqn4.03} whenever  $W^2\ge 3$.
\par Now the theorem proved by L. Simon (Theorem 4.1 of \cite{LS2}) and from the above discussion, we conclude Theorem \ref{thm2}.

 \section{The characterization of minimal surfaces of translation surfaces}
  In this section we study the  minimal translation surface $M^2$ in Kropina space $(\mathbb{R}^3,F)$, where $\mathbb{R}^3$ is a real Minkowski space and $F=\frac{{\tilde{\alpha}}^2}{\tilde{\beta}}$ is a Kropina metric, where $\tilde{\alpha}$ is the Euclidean metric and $\tilde{\beta}=bdx^3$ is a one-form with $b>0$. Here we consider the immersion $\varphi : U \subset \mathbb{R}^2 \to (\mathbb{R}^3, F)$ given by $\varphi(x^1, x^2) =(x^1, x^2, f(x^1)+g(x^2))$.

\par Let us consider the following immersion:
 \begin{equation}\label{eqn5.001}
 \varphi(x^1,x^2)=(\varphi^1,\varphi^2,\varphi^3)=\left( x^1,x^2,f(x^1)+g(x^2)\right) 
 \end{equation}
 Then we can write
 \begin{equation}
 \varphi^j=\delta_{j1}x_1+\delta_{j2}x_2+(f+g)\delta_{j3}, \quad  1\le \epsilon \le 3.
 \end{equation}
 Therefore, from \eqref{eqn2.001} and \eqref{eqn5.001} we get 
  \begin{equation}\label{eqn5.1}
  A = 
  \begin{pmatrix}
  1+f^2_{x^1} & f_{x^1}g_{x^2} \\
  f_{x^1}g_{x^2} & 1+g^2_{x^2} \\
  \end{pmatrix}, \quad C=\sqrt{det A}=\sqrt{1+f^2_{x^1}+g^2_{x^2}} \quad \textnormal{and} \quad E=b^2(f^2_{x^1}+g^2_{x^2})
  \end{equation}
  Here we choose $v=\varphi_{x^1}\times \varphi_{x^2} $. Then $v=(v^1,v^2,v^3)=(-f_{x^1},-g_{x^2},1)$. Hence, $v^i=-\delta_{i3}f_{x^1}-\delta_{i3}g_{x^2}+\delta_{i3}, \quad 1\le i\le 3$.\\
 By some simple calculations we can have
 \begin{equation}\label{eqn5.3}
 \frac{\partial C}{\partial z^i_{\epsilon}}v^i=0, \quad  \frac{\partial E}{\partial z^i_{\epsilon}}v^i= 2b^2(\delta_{\epsilon 1}f_{x^1}+\delta_{\epsilon 2}g_{x^2}),
 \end{equation}
 \begin{equation}\label{eqn5.5}
 \frac{\partial C}{\partial z^j_{\eta}}\frac{\partial^2\varphi^j} {\partial x^{\epsilon}\partial x^{\eta}}= \frac{\delta_{\epsilon 1}f_{x^1}f_{x^1x^1}+\delta_{\epsilon 2}g_{x^2}g_{x^2x^2}}{C},
 \end{equation}
 \begin{equation}\label{eqn5.6}
 \frac{\partial E}{\partial z^j_{\eta}}\frac{\partial^2\varphi^j} {\partial x^{\epsilon}\partial x^{\eta}}= 2b^2(\delta_{\epsilon 1}f_{x^1}f_{x^1x^1}+\delta_{\epsilon 2}g_{x^2}g_{x^2x^2}),
 \end{equation}
 \begin{equation}\label{eqn5.7}
 \frac{\partial^2 E}{\partial z^i_{\epsilon}\partial z^j_{\eta}}\frac{\partial^2\varphi^j} {\partial x^{\epsilon}\partial x^{\eta}}v^i= 2b^2\left[ (1+g^2_{x^2})f_{x^1x^1}+(1+f^2_{x^1})g_{x^2x^2}\right] ,
 \end{equation}
 \begin{equation}\label{eqn5.8}
 \frac{\partial^2 C^2}{\partial z^i_{\epsilon}\partial z^j_{\eta}}\frac{\partial^2\varphi^j} {\partial x_{\epsilon}\partial x_{\eta}}v^i= 2\left[ (1+g^2_{x^2})f_{x^1x^1}+(1+f^2_{x^1})g_{x^2x^2}\right],
 \end{equation}
 Then substituting all these above values in \eqref{eq1} we obtain
 \begin{equation}\label{eqn5.12}
 \lambda f_{x^1x^1}+ \mu g_{x^2x^2}=0
 \end{equation}
 where,
 \begin{equation}\label{eqn5.01}
\lambda= 8f^2_{x^1}+2f^2_{x^1}(f^2_{x^1}+g^2_{x^2})+\left( 1+g^2_{x^2}\right) (f^2_{x^1}+g^2_{x^2})(f^2_{x^1}+g^2_{x^2}-2)
 \end{equation}
 and
  \begin{equation}\label{eqn5.02}
  \mu= 8g^2_{x^2}+2g^2_{x^2}(f^2_{x^1}+g^2_{x^2})+\left( 1+f^2_{x^1}\right) (f^2_{x^1}+g^2_{x^2})(f^2_{x^1}+g^2_{x^2}-2)
  \end{equation}
Let $r=\left( f_{x^1}\right)^2$ and $s=\left( g_{x^2}\right)^2 $. Then
\begin{equation}
f_{x^1x^1}=\frac{r_f}{2}, \qquad g_{x^2x^2}=\frac{s_g}{2}.
\end{equation}
Then \eqref{eqn5.01} and \eqref{eqn5.02} becomes

\begin{equation}\label{eqn5.15}
\lambda=8r+2r(r+s)+(1+s)(r+s)(r+s-2) 
\end{equation}
and
\begin{equation}\label{eqn5.16}
\mu=8s+2s(r+s)+(1+r)(r+s)(r+s-2)
\end{equation}
And \eqref{eqn5.12} becomes
\begin{equation}
r_f\lambda+s_g\mu=0
\end{equation}
Therefore, we have two cases:\\
\textbf{Case 1:} If $r_f=0$ or, $s_g=0$, then $r$ and $s$ are constant functions. And hence $f$ and $g$ are linear functions. Therefore, $M^2$ is a plane in $(V^3, F)$ locally.\\
\textbf{Case 2:} Let $r_f\ne 0$ and $s_g \ne 0$. Then we have, $\lambda\ne 0$ and $\mu \ne 0$. Suppose 
\begin{equation}
\kappa =\frac{r_f}{\mu}=-\frac{s_g}{\lambda}.
\end{equation}
Which implies that
\begin{equation*}
(r_f)_g=\mu_g\kappa+\mu\kappa_g=0 \quad \textnormal{and} \quad (s_g)_f=\lambda_f\kappa+\lambda\kappa_f=0
\end{equation*}
Hence, we have, 
\begin{equation}
\left( \log{\kappa}\right) _g=\frac{\kappa_g}{\kappa}=-\frac{\mu_g}{\mu} \quad \textnormal{and} \quad  ( \log{\kappa})_f= \frac{\kappa_f}{\kappa}=-\frac{\lambda_f}{\lambda}.
\end{equation}
Since, $(\log{\kappa})_{fg}=(\log{\kappa})_{gf}$, we have,
\begin{equation}\label{eqn5.17}
\left(\frac{\lambda_f}{\lambda} \right)_g =\left( \frac{\mu_g}{\mu}\right)_f.
\end{equation}
We can easily observe that, $r_g=(r_f)_g=0$ and $s_f=(s_g)_f=0$. Therefore, we have, 
\begin{equation}\label{eqn5.18}
\left(\frac{\lambda_f}{\lambda} \right)_g =\left( \frac{\lambda_rr_f}{\lambda}\right) _g=\left( \frac{\lambda_r}{\lambda}\right) _gr_f=\left( \frac{\lambda_r}{\lambda}\right) _sr_fs_g
\end{equation} 
and
\begin{equation}\label{eqn5.19}
\left(\frac{\mu_g}{\mu} \right)_f =\left( \frac{\mu_ss_g}{\mu}\right) _f=\left( \frac{\mu_s}{\mu}\right) _fs_g=\left( \frac{\mu_s}{\mu}\right) _rr_fs_g
\end{equation} 
Using \eqref{eqn5.18} and \eqref{eqn5.19} in \eqref{eqn5.17} we get,
\begin{equation}
\left( \frac{\lambda_r}{\lambda}\right) _s =\left( \frac{\mu_s}{\mu}\right) _r.
\end{equation}
That is,
\begin{equation}\label{eqn5.42}
\left( \log \frac{\lambda}{\mu}\right)_{rs}=0 
\end{equation}
Let $p=r+s$ and $q=r-s$. Then we have
\begin{equation}
\lambda=K(p)-L(p)q, \qquad \mu=K(p)+L(p)q
\end{equation}
where,
\begin{equation}\label{eqn5.201}
K(p)=4p+p^2+p(p-2)+\frac{1}{2}p^2(p-2)
\end{equation}
\begin{equation}\label{eqn5.202}
L(p)=\frac{1}{2}p(p-2)-p-4
\end{equation}
Now from \eqref{eqn5.42} it follows that
\begin{equation}\label{eqn5.20}
\left( \log \frac{\lambda}{\mu}\right)_{rs}=\left( \log \frac{\lambda}{\mu}\right)_{pp}-\left( \log \frac{\lambda}{\mu}\right)_{qq}=0
\end{equation}
Now substitute the values of $\lambda$ and $\mu$ in \eqref{eqn5.20} we get
\begin{equation}
\begin{split}
q^3\left(K_{pp}L^3-KL^2L_{pp}-2K_pL_pL^2+2KLL^2_p \right) \\ +q\left( -K_{pp}K^2L+K^3L_{pp}-2K_pK^2L_p+2K^2_pKL-2KL^3\right) =0.
\end{split}
\end{equation}
Since, $q$ is an arbitrary function we get,
\begin{equation}\label{eqn5.21}
K_{pp}L^3-KL^2L_{pp}-2K_pL_pL^2+2KLL^2_p=0
\end{equation}
\begin{equation}\label{eqn5.22}
 -K_{pp}K^2L+K^3L_{pp}-2K_pK^2L_p+2K^2_pKL-2KL^3=0
\end{equation}
Multiplying \eqref{eqn5.21} by $\frac{N}{M}$, and \eqref{eqn5.22} by $\frac{M}{N}$ and then adding them together, we obtain 
\begin{equation}\label{eqn5.23}
\left[ \left(\frac{K}{L} \right)_p \right]^2=1. 
\end{equation}
Again from \eqref{eqn5.201} and \eqref{eqn5.202} we have
\begin{equation}\label{eqn5.24}
\frac{K}{L}=p+ \frac{4p^2+12p}{p^2-4p-8}
\end{equation}
Now differentiating \eqref{eqn5.24} with respect to $p$ we get 
\begin{equation}\label{eqn5.25}
\left( \frac{K}{L}\right)_p=1-4\frac{p^2+34p+12}{(p^2-4p-8)^2} 
\end{equation}
Squaring both sides of \eqref{eqn5.25} gives
\begin{equation}\label{eqn5.024}
\left[ \left(\frac{K}{L} \right)_p \right]^2=1-8\frac{p^2+34p+12}{(p^2-4p-8)^2}+16\left( \frac{p^2+34p+12}{(p^2-4p-8)^2} \right)^2 
\end{equation}
Hence, the right hand side of \eqref{eqn5.024} is a rational function of $p$. Therefore, it can not be equal to $1$ identically. This argument contradicts \eqref{eqn5.23}. Hence it leads to the fact that case 1 is true and hence we obtain Theorem \ref{thm3}.

 \end{document}